\documentclass[preprint,12pt,authoryear]{elsarticle}
\setcitestyle{numbers,square,comma}

\usepackage{amssymb}
 \usepackage{amsthm}
 \usepackage{amsmath}
 \usepackage{hyperref}
 \hypersetup{hidelinks}
 \usepackage{verbatim}
 \usepackage{tikz}
\usepackage{tikz-cd}

\journal{Designs, Codes and Cryptography}

\begin{document}

\begin{frontmatter}



\title{Resolving a conjecture on permutation polynomials over $\mathbb{F}_{2^n}$}


\author[4]{Yi Li}



\affiliation[4]{organization={School of Computer Science, Shanghai Jiao Tong University}, city={Shanghai}, postcode={200240}, country={China}}
\begin{abstract}

Let $\delta\in\mathbb{F}_{2^n}$ satisfy
$\operatorname{Tr}_{\mathbb{F}_{2^n}/\mathbb{F}_2}(\delta)=1$. We study the permutation behavior of
$$
f(x)
=
\left(\frac{1}{x^2+x+\delta}\right)^{2^k}+x
$$
over $\mathbb{F}_{2^n}$. Helleseth and Zinoviev proved that
$f(x)$ is a permutation for $k=0,1$, and remarked that numerical evidence suggests that no other cases occur. In this paper, we confirm their assertion by proving that, for $0\leq k<n$, $f(x)$ is a permutation of $\mathbb{F}_{2^n}$ if and only if $k=0$ or $k=1$.

\end{abstract}

\begin{keyword}
finite field \sep permutation polynomial \sep trace function


\MSC[2020] 11T06\sep 11T55

\end{keyword}

\end{frontmatter}


\newtheorem{exa}{Example}
\newtheorem{rmk}{Remark}
\newtheorem{defn}{Definition}
\newtheorem{pro}{Proposition}
\newtheorem{thm}{Theorem}
\newtheorem{lem}{Lemma}
\newtheorem{case}{Case}	
\newtheorem{cor}{Corollary}
\newtheorem{subcase}{Case}
\numberwithin{subcase}{case}
\newcommand{\F}{\mathbb F}
\newcommand{\Tr}{\operatorname{Tr}}
\section{Introduction}

Let $q$ be a prime power and let $\mathbb{F}_q$ denote the finite field with
$q$ elements. A polynomial $f\in \mathbb{F}_q[x]$ is called a permutation
polynomial of $\mathbb{F}_q$ if the map $x\mapsto f(x)$ is a bijection on
$\mathbb{F}_q$. Permutation polynomials are classical objects in finite field
theory and have many applications in coding theory \cite{OTCCFM2013,PPACC2007}, cryptography \cite{PKE1998,RSA1978},
combinatorial design theory \cite{AFSHDS2006}, and we refer the reader to \cite{PPOFFASORA2015,HFF2013,PPOFFAIA2019, wangDCC2024} and references therein for more details of the recent advances and contributions of the area.

Permutation polynomials involving terms of the form $(x^2+x+\delta)^{-1}$ first arose in the study of Kloosterman sums over binary finite fields. Helleseth and Zinoviev \cite{HZ2003} investigated permutation polynomials of the form
$$
\left(\frac{1}{x^2+x+\delta}\right)^s+x
$$
over $\mathbb{F}_{2^n}$, where $\operatorname{Tr}_{\mathbb{F}_{2^n}/\mathbb{F}_2}(\delta)=1$. This trace condition guarantees that $x^2+x+\delta$ has no roots in $\mathbb{F}_{2^n}$, and hence the reciprocal expression is well defined on the whole field. Their work was motivated by the derivation of new identities for Kloosterman sums over binary finite fields, and the cases $s=1$ and $s=2$ were established there.

At the end of their paper, Helleseth and Zinoviev remarked that numerical evidence suggests that the polynomial
$$
f(x)
=
\left(\frac{1}{x^2+x+\delta}\right)^{2^k}+x
$$
over $\mathbb{F}_{2^n}$ is a permutation polynomial only in the cases
$k=0$ and $k=1$. This remark naturally raises the problem of determining whether these are indeed the only possible cases.

In this paper, we give a complete answer to this problem. More precisely, under the assumption
$$
\operatorname{Tr}_{\mathbb{F}_{2^n}/\mathbb{F}_2}(\delta)=1,
$$
we prove that
$$
f(x)
=
\left(\frac{1}{x^2+x+\delta}\right)^{2^k}+x
$$
is a permutation polynomial of $\mathbb{F}_{2^n}$ if and only if
$k=0$ or $k=1$, where $0\leq k<n$.

The rest of this paper consists of the proof of the main theorem. In Section 2, we derive the collision equation for
$$
f(x)
=
\left(\frac{1}{x^2+x+\delta}\right)^{2^k}+x
$$
and prove the necessary and sufficient condition for this function to be a permutation of $\mathbb{F}_{2^n}$. Section 3 concludes this paper.

\section{Proof of the main theorem}

In this section we prove the main theorem.

\begin{thm}
    $f(x)=\left(\frac{1}{x^2+x+\delta}\right)^{2^k}+x$ is a permutation polynomial over $\mathbb{F}_{2^n}$ if and only if $k\in \{0,1\}$ when $\text{Tr}_{\mathbb{F}_{2^n}/\mathbb{F}_2}(\delta)=1$.
\end{thm}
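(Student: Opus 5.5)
The proof splits into the two directions. For sufficiency ($k=0,1$) we can simply invoke Helleseth and Zinoviev \cite{HZ2003}, although it will be instructive to see why the argument below produces no collision in these cases. For necessity, fix $2\le k<n$ and write $q=2^k$. The aim is to exhibit $x\neq y$ in $\mathbb{F}_{2^n}$ with $f(x)=f(y)$.

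\textbf{Reducing a collision to a condition on the difference.} Put $y=x+a$ with $a\neq 0$, and let $u=x^2+x+\delta$. Then $y^2+y+\delta=u+b$ with $b=a^2+a$. The collision equation reads
$$a=\left(\frac{1}{u}+\frac{1}{u+b}\right)^{q}=\left(\frac{b}{u(u+b)}\right)^{q}.$$
The value $a=1$ is impossible, since then $b=0$ and the right-hand side vanishes; this matches $f(x+1)=f(x)+1$. So assume $a\notin\{0,1\}$ and take $q$-th roots (Frobenius is bijective). The equation becomes
$$u^2+bu=c,\qquad c:=a^{1-1/q}(a+1),$$
where $a^{1/q}=a^{2^{n-k}}$.

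Conversely, given $a\notin\{0,1\}$, the collision exists exactly when two conditions hold. First, $u^2+bu+c$ must have a root $u\in\mathbb{F}_{2^n}$. Second, that $u$ must lie in $\delta+\{t^2+t\}$, i.e.\ $\operatorname{Tr}(u)=1$; then $x$ with $x^2+x=u+\delta$ exists.

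Write $u=bw$ with $w^2+w=c/b^2=a^{-1-1/q}(a+1)^{-1}$. The two conditions become:
\begin{itemize}
\item[(i)] $\operatorname{Tr}\bigl(a^{-1-1/q}(a+1)^{-1}\bigr)=0$;
\item[(ii)] $\operatorname{Tr}(bw)=1$ for a solution $w$ of the equation above.
\end{itemize}
Since $\operatorname{Tr}(b)=0$, condition (ii) does not depend on which of the two roots $w$, $w+1$ is chosen. The theorem thus reduces to showing that for $2\le k<n$ some $a\in\mathbb{F}_{2^n}\setminus\{0,1\}$ satisfies (i) and (ii). As a sanity check, for $k=0,1$ this system should have no solution, and I would verify this first to see which identity kills it.

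\textbf{Counting approach.} Substitute $a=s^q$ to clear the fractional exponent. Conditions (i) and (ii) then define an Artin--Schreier-type tower over the $s$-line: $w^2+w=g(s)$ together with $z^2+z=h(s,w)+\delta'$, where $\operatorname{Tr}(\delta')=1$ is used to encode the value $1$. A Weil-type estimate for the number of rational points gives roughly $2^n/4+O(q\,2^{n/2})$ admissible $a$.

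By the symmetry $a^{1/q}=a^{2^{n-k}}$, we may also work with exponent $2^{n-k}$. Hence the degree can be taken to be $\min(2^k,2^{n-k})$, and the bound settles all $k$ with $\min(k,n-k)$ small compared to $n/2$.

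\textbf{Main obstacle: $k$ near $n/2$.} There the Weil error term swamps the main term. I would first try to find explicit solutions instead, in one of two ways:
\begin{itemize}
\item choose $a$ in a subfield, such as $\mathbb{F}_4$ when $n$ is even, or $\mathbb{F}_{2^d}$ with $d\mid\gcd(n,k)$-related, so that $a^{1/q}$ reduces to a small power of $a$ and (i), (ii) become explicit trace computations;
\item choose $a$ with $a^{q}$ a simple function of $a$, for instance $a^{q+1}=1$-type elements.
\end{itemize}
In each case one would evaluate the traces using $\operatorname{Tr}(\delta)=1$ and the transitivity of the trace.

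If no uniform explicit family appears, the fallback is a character-sum argument in which the additive characters are summed over $a$ directly. The aim would be cancellation that exploits the linearized structure of $a\mapsto a^{q}$ rather than its degree, for example by reducing to Kloosterman-type or Gauss-type sums with known square-root bounds independent of $q$. Showing that the resulting sum stays below $2^n/4$ for all $2\le k\le n-2$ is the step I expect to be hardest. The remaining case $k=n-1$ corresponds to $1/q=2$ and should be handled directly by the same explicit trace computation.
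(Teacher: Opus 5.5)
Your reduction to conditions (i) and (ii) is correct and matches the paper's ($T(u)=0$, $S(u)=1$). Citing Helleseth--Zinoviev for $k=0,1$ is also fine. However, the proposal stops exactly where the real work begins: it never proves that an admissible $a$ exists for every $2\le k<n$.

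\textbf{Why the counting approach cannot finish.} The Weil-type count has an error of order $\min(2^k,2^{n-k})\,2^{n/2}$ against a main term $2^n/4$. It therefore only settles $k$ with $|k-n/2|$ larger than some absolute constant, and only for large $n$. For every $n$ a window of $k$ around $n/2$ remains, so infinitely many pairs $(n,k)$ are untouched, as are all small $n$.

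\textbf{Why the suggested remedies fail.} Condition (ii) simplifies to $\operatorname{Tr}(bw)=\operatorname{Tr}\bigl(a^2(w^2+w)\bigr)=\operatorname{Tr}\bigl(a^{1-1/q}/(a+1)\bigr)$.
\begin{itemize}
\item Subfield elements: if $a\in\mathbb{F}_{2^d}$ with $n/d$ even, the argument of this trace lies in $\mathbb{F}_{2^d}$. Its absolute trace is then $(n/d)\operatorname{Tr}_{\mathbb{F}_{2^d}/\mathbb{F}_2}(\cdot)=0$, so (ii) can never hold. In particular $\mathbb{F}_4$ is useless when $4\mid n$, and no proper subfield helps when $n$ is a power of $2$.
\item Elements with $a^{q+1}=1$, $a\neq 1$: these exist only when $n/\gcd(n,k)$ is even, and they lie in $\mathbb{F}_{2^{2\gcd(n,k)}}$. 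They hit the same obstruction unless $n/(2\gcd(n,k))$ is odd.
\item The character-sum fallback is, as you say yourself, only a hope.
\end{itemize}
So the necessity direction is not established.

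\textbf{The missing idea: avoid counting altogether.} After simplifying (ii) as above, substitute $x=a/(a+1)$ and apply the $q$-th power Frobenius. The conditions become
\begin{itemize}
\item $S(x)=\operatorname{Tr}(x+x^{q-1})=1$, and
\item $G(x)=\operatorname{Tr}(x^{-1}+x^{-q-1})=1$,
\end{itemize}
and condition (i) turns into $S+G=0$. If $f$ were a permutation, the function $F=S\cdot G$ would vanish on all of $\mathbb{F}_{2^n}^*$ (note $S(1)=0$). That is, $F$ would be the zero element of $\mathbb{F}_2[x]/(x^{2^n-1}-1)$.

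Expanding the traces, the coefficient of $x^e$ in $F$ is the parity of the number of pairs of exponents $A\in\{2^i,(2^k-1)2^i\}$ and $B\in\{2^j,(2^k+1)2^j\}$ with $A-B\equiv e \pmod{2^n-1}$. The paper exhibits an odd coefficient by elementary binary-carry arguments:
\begin{itemize}
\item the constant term for $n$ odd;
\item the coefficient of $x^{2^k-2}$ for $n$ even and $k\ge 3$;
\item the coefficient of $x^{2^n-2}$ for $k=2$.
\end{itemize}
This argument is uniform in $k$ and needs no size estimates, which is exactly what your approach lacks near $k=n/2$. A couple of tiny cases, namely $(n,k)=(3,2)$ and $(4,2)$, fall outside these three coefficient choices. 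They must be checked directly, and a direct check shows collisions do exist there.
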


\begin{proof}
    Let $z=x^2+x+\delta$. Assume $f(x)$ is not a permutation polynomial over $\mathbb{F}_{2^n}$, then there exists $u\neq 0$, $f(x)=f(x+u)$. Expanding this, we get
    \begin{eqnarray}\label{1}
        x+\frac{1}{z^{2^k}}=x+u+\frac{1}{(z+u^2+u)^{2^k}}.
    \end{eqnarray}

    Eliminating $x$, we get
    \begin{eqnarray}\label{2}
        u=\frac{1}{z^{2^k}}+\frac{1}{(z+u^2+u)^{2^k}}.
    \end{eqnarray}

    Raising both sides of the equation to the $2^{-k}$-th power, we obtain 
    \begin{eqnarray}\label{3}
        u^{2^{-k}}=\frac{1}{z}+\frac{1}{z+u^2+u}.
    \end{eqnarray}

    Rewriting Equation (\ref{3}), we obtain a quadratic equation in the variable $z$, 
    \begin{eqnarray}\label{4}
        z^2+(u^2+u)z+\frac{u^2+u}{u^{2^{-k}}}=0.
    \end{eqnarray}

    The function $f(x)$ admits a collision if and only if Equation (\ref{4}) has a solution in $\mathbb{F}_{2^n}$, and any solution $z$ necessarily satisfies $\text{Tr}_{\mathbb{F}_{2^n}/\mathbb{F}_2}(z)=1$.

    Since $A=u^2+u\neq 0$, let $z=Aw$. Substituting into Equation (\ref{4}) yields $(u^2+u)^2w^2+(u^2+u)^2w+\frac{u^2+u}{u^{2^{-k}}}=0$. Dividing both sides by $A^2$, we obtain
    \begin{eqnarray}\label{5}
        w^2+w=\frac{1}{u^{2^{-k}}(u^2+u)}.
    \end{eqnarray}

    Since $z=(u^2+u)w$, we know $\text{Tr}_{\mathbb{F}_{2^n}/\mathbb{F}_2}(z)=\text{Tr}_{\mathbb{F}_{2^n}/\mathbb{F}_2}(u^2w)+\text{Tr}_{\mathbb{F}_{2^n}/\mathbb{F}_2}(uw)$. Since $\text{Tr}_{\mathbb{F}_{2^n}/\mathbb{F}_2}(uw)=\text{Tr}_{\mathbb{F}_{2^n}/\mathbb{F}_2}(u^2w^2)$, we get $\text{Tr}_{\mathbb{F}_{2^n}/\mathbb{F}_2}(z)=\text{Tr}_{\mathbb{F}_{2^n}/\mathbb{F}_2}(u^2(w^2+w))$. From Equation (\ref{5}), we get
    \begin{eqnarray}\label{6}
        \text{Tr}_{\mathbb{F}_{2^n}/\mathbb{F}_2}(z)=\text{Tr}_{\mathbb{F}_{2^n}/\mathbb{F}_2}\left(\frac{u^2}{u^{2^{-k}}(u^2+u)}\right)=\text{Tr}_{\mathbb{F}_{2^n}/\mathbb{F}_2}\left(\frac{u}{u^{2^{-k}}(u+1)}\right).
    \end{eqnarray}

    Now we show when $k=0$ or $1$, we can get a contradiction.

    When $k=0$, Equation (\ref{4}) has a solution if and only if $\text{Tr}_{\mathbb{F}_{2^n}/\mathbb{F}_2}\left(\frac{1}{u^2(u+1)}\right)=0$. Using a simple partial fraction decomposition $\frac{1}{u^2(u+1)}=\frac{1}{u^2}+\frac{1}{u}+\frac{1}{u+1}$, we obtain 
    \begin{eqnarray}\label{7}
        \text{Tr}_{\mathbb{F}_{2^n}/\mathbb{F}_2}\left(\frac{1}{u^2(u+1)}\right)=\text{Tr}_{\mathbb{F}_{2^n}/\mathbb{F}_2}\left(\frac{1}{u^2}\right)+\text{Tr}_{\mathbb{F}_{2^n}/\mathbb{F}_2}\left(\frac{1}{u}\right)+\text{Tr}_{\mathbb{F}_{2^n}/\mathbb{F}_2}\left(\frac{1}{u+1}\right).
    \end{eqnarray}

    Hence Equation (\ref{4}) has a solution if and only if $\text{Tr}_{\mathbb{F}_{2^n}/\mathbb{F}_2}\left(\frac{1}{u+1}\right)=0$.

    However, in this case $\text{Tr}_{\mathbb{F}_{2^n}/\mathbb{F}_2}(z)=\text{Tr}_{\mathbb{F}_{2^n}/\mathbb{F}_2}\left(\frac{1}{u+1}\right)=0$, which is a contradiction since $\text{Tr}_{\mathbb{F}_{2^n}/\mathbb{F}_2}(z)=1$.

    When $k=1$, $\text{Tr}_{\mathbb{F}_{2^n}/\mathbb{F}_2}\left(z\right)=\text{Tr}_{\mathbb{F}_{2^n}/\mathbb{F}_2}\left(\frac{u}{u^{\frac{1}{2}}(u+1)}\right)=\text{Tr}_{\mathbb{F}_{2^n}/\mathbb{F}_2}\left(\frac{u^2}{u(u+1)^2}\right)=\text{Tr}_{\mathbb{F}_{2^n}/\mathbb{F}_2}\left(\frac{u}{u^2+1}\right)$. Again, using a simple partial fraction decomposition $\frac{u}{u^2+1}=\frac{1}{u+1}+\frac{1}{u^2+1}$, we get 
    \begin{eqnarray}\label{8}
        \text{Tr}_{\mathbb{F}_{2^n}/\mathbb{F}_2}\left(z\right)=\text{Tr}_{\mathbb{F}_{2^n}/\mathbb{F}_2}\left(\frac{1}{u+1}\right)+\text{Tr}_{\mathbb{F}_{2^n}/\mathbb{F}_2}\left(\frac{1}{u^2+1}\right)=0.
    \end{eqnarray}

    This contradicts to the condition $\text{Tr}_{\mathbb{F}_{2^n}/\mathbb{F}_2}(z)=1$.

    Next, we show when $k\geq 2$, we can find such $u$, which satisfy
    \begin{eqnarray}\label{9}
        T(u)=\text{Tr}_{\mathbb{F}_{2^n}/\mathbb{F}_2}\left(\frac{1}{u^{2^{-k}}(u^2+u)}\right)=0,
    \end{eqnarray}
    \begin{eqnarray}\label{10}
        S(u)=\text{Tr}_{\mathbb{F}_{2^n}/\mathbb{F}_2}\left(\frac{u}{u^{2^{-k}}(u+1)}\right)=1.
    \end{eqnarray}

        Let $x=\frac{u}{u+1}$. Since $u\not\in \{0,1\}$, $\frac{u}{u+1}$ is a bijection over $\mathbb{F}_{2^n}\setminus \{0,1\}$. Plugging $x$ into $T(u)$ and $S(u)$, we get

        \begin{eqnarray}\label{11}
            T(x)=\text{Tr}_{\mathbb{F}_{2^n}/\mathbb{F}_2}\left(\frac{1}{(x/(x+1))^{2^{-k}}(x/(x+1))(1/(x+1))}\right)=\text{Tr}_{\mathbb{F}_{2^n}/\mathbb{F}_2}\left((x+1)^{2^{-k}+2}x^{-1-2^{-k}}\right),
        \end{eqnarray}

        \begin{eqnarray}\label{12}
            S(x)=\text{Tr}_{\mathbb{F}_{2^n}/\mathbb{F}_2}\left(\frac{x/(x+1)}{(x/(x+1))^{2^{-k}}(1/(x+1))}\right)=\text{Tr}_{\mathbb{F}_{2^n}/\mathbb{F}_2}\left(x^{1-2^{-k}}(x+1)^{2^{-k}}\right).
        \end{eqnarray}

         Raising both sides of Equation (\ref{12}) to the $2^k$-th power, we get

         \begin{eqnarray}\label{13}
             S(x)=\text{Tr}_{\mathbb{F}_{2^n}/\mathbb{F}_2}(x^{2^k-1}(x+1))=\text{Tr}_{\mathbb{F}_{2^n}/\mathbb{F}_2}(x^{2^k}+x^{2^k-1})=\text{Tr}_{\mathbb{F}_{2^n}/\mathbb{F}_2}\left(x+x^{2^{k}-1}\right).
         \end{eqnarray}

         Again, raising both sides of Equation (\ref{11}) to the $2^k$-th power, we get
         \begin{eqnarray}\label{14}
             T(x)=\text{Tr}_{\mathbb{F}_{2^n}/\mathbb{F}_2}\left((x+1)^{2^{k+1}+1}x^{-2^k-1}\right)=\text{Tr}_{\mathbb{F}_{2^n}/\mathbb{F}_2}\left(x^{2^k}+x^{2^{k}-1}+x^{-2^{k}}+x^{-2^k-1}\right).
         \end{eqnarray}

         Hence the condition $T(x)=0$ and $S(x)=1$ are equivalent to $S(x)=1$ and $G(x)=1$, where $G(x)=\text{Tr}_{\mathbb{F}_{2^n}/\mathbb{F}_2}(x^{-1}+x^{-2^k-1})$.

         Now we want to find such $x$, which satisfy
         \begin{eqnarray}\label{15}
             S(x)=\text{Tr}_{\mathbb{F}_{2^n}/\mathbb{F}_2}(x+x^{2^k-1})=1,
         \end{eqnarray}

         and \begin{eqnarray}\label{16}
              G(x)=\text{Tr}_{\mathbb{F}_{2^n}/\mathbb{F}_2}(x^{-1}+x^{-2^k-1})=1.
         \end{eqnarray}

Let $F(x)=S(x)G(x)$. If $f(x)$ is a permutation over $\mathbb{F}_{2^n}$, then $F(x)=0$ for any $\mathbb{F}_{2^n}\setminus \{0,1\}$. Since $F(1)= 0$, $F(x)=0$ for all $x\in \mathbb{F}_{2^n}^{*}$. This is equivalent to saying that $F(x)$ is the zero polynomial when viewed in $\mathbb{F}_2[x]/(x^{2^{n}-1}-1)$. Next, we show that when $k\geq 2$, the polynomial $F(x)$ is not the zero polynomial for any $n$, thereby completing the proof.

Case $1$, when $n$ is odd, we now examine the coefficient of $x^0$, i.e., the constant term. The constant term equals the number of pairs $(i,j)$ satisfying $A_i\equiv B_j \pmod {2^{n}-1}$. There are four possible combinations:
$2^i\equiv 2^j \pmod {2^n-1}$, $(2^k-1)2^i\equiv 2^j \pmod {2^n-1}$, $2^i\equiv (2^k+1)2^j \pmod {2^n-1}$, $(2^k-1)2^i\equiv (2^k+1)2^j\pmod {2^n-1}$. Let us check the cases one by one. 

1. $2^i\equiv 2^j \pmod{2^n-1}$. Clearly, this congruence has exactly $n$ solutions, namely when $i=j$.

2. $(2^k-1)2^i\equiv 2^j \pmod{2^n-1}$. This congruence equation is equivalent to $(2^k-1)\equiv 2^{j-i}\pmod {2^n-1}$. Since $k\geq 2$ and $k<n$, $2^k-1$ is an odd integer greater than or equal to $3$. It cannot be a power of $2$; therefore, there are no solutions.

3. $2^i\equiv (2^k+1)2^j\pmod {2^n-1}$. This congruence equation is equivalent to $2^{i-j}\equiv 2^k+1 \pmod{2^n-1}$. Since $k\geq 2$ and $k<n$, $2^k+1$ is an odd integer. It cannot be a power of $2$; therefore, there are no solutions.

4. $(2^k-1)2^i\equiv (2^k+1)2^j\pmod {2^n-1}$. This congruence equation is equivalent to $2^k-1\equiv (2^k+1)2^m$, where $m=j-i$. We rewrite the equation as $2^{k+m}+2^m+1\equiv 2^k \pmod{2^n-1}$. Since the left-hand side of the congruence is a sum of three powers of $2$, in order for it to reduce to a single power of $2$ modulo $2^n-1$
(namely the $2^k$ on the right-hand side), a carry must occur on the left-hand side (that is, at least two of the exponents must coincide).

If $m\equiv 0 \pmod n$, then the left-hand side becomes $2^k+1+1=2^k+2$, while the right-hand side is $2^k$, which is impossible.

If $k+m\equiv m\pmod n$, then $k\equiv 0 \pmod n$, which is a contradiction since $k\geq 2$.

If $k+m\equiv 0\pmod n$, then $2^{-k}+2\equiv 2^k \pmod{2^n-1}$. This congruence equation is equivalent to $2^{k+1}+1\equiv 2^{2k}\pmod{2^n-1}$. This congruence equation holds if and only if $2^{k+1}+1+2^n-1=2^{2k}$. This equation is equivalent to $2^{k+1}+2^n=2^{2k}$, which implies $k=n-1$ and $2k=n+1$. This happens only when $n=3$ and $k=2$. In other cases, the congruence equation has no solutions.

We sum the solutions of these four equations (excluding the case \(n=3, k=2\)). For any odd integer \(n \ge 3\), the total number of pairs contributing to the constant term \(x^0\) is
\[
n + 0 + 0 + 0 = n.
\]
Since \(n\) is odd, the coefficient of the constant term is therefore always \(1\). Hence, in this case, \(f(x)\) is not a permutation polynomial over \(\mathbb{F}_{2^n}\).

Case 2, when $n$ is even and $n\geq 4$, $k\geq 3$, we now examine the coefficient of $x^{2^k-2}$. This term equals the number of pairs $(i,j)$ satisfying $A_i-B_j\equiv 2^k-2\pmod{2^n-1}$. There are four possible combinations: $2^i-2^j\equiv 2^k-2 \pmod{2^n-1}$, $(2^k-1)2^i-2^j\equiv 2^k-2 \pmod{2^n-1}$, $2^i-(2^k+1)2^j\equiv 2^k-2\pmod{2^n-1}$, $(2^k-1)2^i-(2^k+1)2^j\equiv 2^k-2\pmod{2^n-1}$. Let us check the cases one by one.

1. $2^i-2^j\equiv 2^k-2\pmod{2^n-1}$. When $i\geq j$, $2^i-2^j\geq 0$. Since $i< n$, $2^i-2^j<2^n-1$. Hence, in this case, the congruence equation is in fact an equation. Since the binary representation of a number is unique, we conclude that the only solution is $i=k$, $j=1$. When $i<j$, $2^i-2^j$ is a negative number. The positive representative of the left-hand side congruence class is in fact $2^n-2^j+2^i-1$. Now this number and $2^k-2$ are all in $[0,2^n-2]$. Hence we have an equation $2^n-2^j+2^i-1=2^k-2$. We move the negative sign to the other side of the equation, obtaining a remarkably elegant equation consisting entirely of positive terms: $2^n+2^i+1=2^k+2^j$. Since $j,k\leq n-1$, this equation has no solutions. Hence, in this case, only one solution can exist.

2. $(2^k-1)2^i-2^j\equiv 2^k-2\pmod{2^n-1}$. We move the negative terms to the other side, obtaining a congruence equation consisting entirely of positive terms: $2^{k+i}+2\equiv 2^k+2^i+2^j\pmod{2^n-1}$. The binary weight of the left-hand side is at most 2 (since it is a sum of two powers of 2). The right-hand side contains three powers of 2; in order for it to equal the left-hand side, a carry must occur on the right-hand side. There are only three possible types of carry.

(1).$i=j$. In this case, the right-hand side is $2^k+2^{i+1}$. While the left-hand side is $2^{k+i}+2$. The only solution is $i=0$, $j=0$.

(2).$k=i$. In this case, the right-hand side is $2^{k+1}+2^j$, while the left-hand side is $2^{2k}+2$. The only possible solution is $j=1$, $2k\equiv k+1 \pmod{ n}$, which implies $k\equiv 1 \pmod{n}$, a contradiction due to $k\geq 3$.

(3).$k=j$. In this case, the right-hand side is $2^{k+1}+2^i$, while the left-hand side is $2^{k+i}+2$. The only possible solution is $i=1$.

In summary, in this case, there are two possible solutions.

3. $2^i-(2^k+1)2^j\equiv 2^k-2 \pmod{2^n-1}$. Similarly, we can obtain a congruence equation consisting entirely of positive terms $2^i+2\equiv 2^k+2^{k+j}+2^j\pmod{2^n-1}$. As in the discussion above, a carry must occur on the right-hand side. There are only two possible types of carry.

(1) $k\equiv k+j\pmod n$. In this case, the right-hand side is $2^{k+1}+1$. The only possible solution is $i=0$, $k\equiv 0\pmod n$, a contradiction, since $k\geq 3$.

(2) $k\equiv j\pmod n$. In this case, the right-hand side is $2^{k+1}+2^{2k}$. By comparing the exponents on both sides, we require $2k\equiv 1\pmod{n}$; however, this is impossible since $n$ is even.

In summary, in this case, there are no possible solutions.

4. $(2^k-1)2^i-(2^k+1)2^j\equiv 2^k-2 \pmod{2^n-1}$. Similarly, we can obtain a congruence equation consisting entirely of positive terms $2^{k+i}+2\equiv 2^i+2^j+2^{k+j}+2^k\pmod{2^n-1}$. As in the discussion above, two carries must occur on the right-hand side. Let us check the case one by one.

(1) $i\equiv j\pmod{n}$, in this case, the right-hand side is $2^{i+1}+2^{k+i}+2^k$. Then the congruence equation is $2^{i+1}+2^k\equiv 2\pmod{2^n-1}$, which implies $k\equiv 0\pmod{n}$, a contradiction.

(2) $i\equiv k+j\pmod{n}$, in this case, the right-hand side is $2^{k+j+1}+2^j+2^k$. If $k+j+1\equiv j\pmod{n}$, then the right-hand side is $2^{j+1}+2^k$. By comparing the exponents on both sides, we can get a contradiction. If $k+j+1\equiv k\pmod{n}$, then the right-hand side is $2^{k+1}+2^j$. Again, we can get a contradiction. If $j\equiv k\pmod{n}$, then the right-hand side is $2^{k+j+1}+2^{k+1}$. Again, we can get a contradiction.

(3) $i\equiv k\pmod{n}$, in this case, the right-hand side is $2^j+2^{k+j}+2^{k+1}$. If $j\equiv k+1\pmod{n}$, then the right-hand side is $2^{k+2}+2^{k+j}$. The only possible solution of $(i,j)$ is $(2,3)$ when $k=2$, $n=4$. If $j\equiv 1\pmod{n}$, then the right-hand side is $2^{k+2}+2$. In this case, $i=2$, $j=1$ is the only possible solution when $k\equiv 2\pmod{n}$.

(4) $j\equiv k+j\pmod{n}$, which is equivalent to $k\equiv 0\pmod{n}$, a contradiction.

(5) $j\equiv k\pmod{n}$. In this case, the right-hand side is $2^{i}+2^{k+j}+2^{k+1}$. If $i\equiv k+j\pmod{n}$, then the right-hand side is $2^{i+1}+2^{k+1}$. Again, we can get a contradiction. If $i\equiv k+1\pmod{n}$, then the right-hand side is $2^{k+2}+2^{k+j}$. Again, we can get a contradiction.

(6) $k+j\equiv k\pmod{n}$, in this case, the right-hand side is $2^i+1+2^{k+1}$. If $i\equiv 0\pmod{n}$, then the right-hand side is $2^{k+1}+2$. Again, we can get a contradiction. If $i\equiv k+1\pmod{n}$, then the right-hand side is $2^{k+2}+1$. Again, we can get a contradiction. If $k+1\equiv 0\pmod{n}$, then the right-hand side is $2^i+2$. Again, we can get a contradiction.

In summary, in this case, there are no possible solutions.

We sum the solutions of these four equations. For any even integer $n\geq 4$, $k\geq 3$, the total number of pairs contributing to the $x^{2^k-2}$ is 
\[
1+2+0+0=3.
\]

Since the characteristic of the quotient ring is $2$, the coefficient of $x^{2^{k}-2}$ is $1$. Hence, in this case, $f(x)$ is not a permutation polynomial over $\mathbb{F}_{2^n}$.

Case 3, when $n$ is even and $n\geq 6$, $k=2$, we now examine the coefficient of $x^{2^n-2}$. This term equals the number of pairs $(i,j)$ satisfying $A_i-B_j\equiv -1\pmod{2^n-1}$. There are four possible combinations: $2^i-2^j\equiv-1\pmod{2^n-1}$, $3\cdot2^i-2^j\equiv -1\pmod{2^n-1}$, $2^i-5\cdot 2^j\equiv -1\pmod {2^n-1}$, $3\cdot 2^i-5\cdot 2^j\equiv -1\pmod{2^n-1}$. Let us check the cases one by one.

1. $2^i-2^j\equiv-1 \pmod{2^n-1}$. We move the negative terms to the other side, obtaining a congruence equation consisting entirely of positive terms: $2^i+1\equiv 2^j\pmod{2^n-1}$. Since $i<n$ and $j<n$, the congruence equation is in fact an equation. The only solution is $i=0$, $j=1$.

2. $3\cdot 2^i-2^j\equiv -1\pmod{2^n-1}$. Similarly, we can obtain a congruence equation consisting entirely of positive terms: $2^i+2^{i+1}+1\equiv 2^j\pmod{2^n-1}$. If $i\equiv 0\pmod{n}$, then the only solution is $i=0$, $j=2$. If $i+1\equiv 0\pmod{n}$, then the left-hand side is $2^{n-1}+2$. Again, we can get a contradiction.

3. $2^i-5\cdot 2^j\equiv -1\pmod{2^n-1}$. Similarly, we can obtain a congruence equation consisting entirely of positive terms: $2^i+1\equiv 2^j+2^{j+2}\pmod{2^{n}-1}$. If $i\equiv j\pmod{n}$, then $j=n-2$. The only solution is $(n-2,n-2)$. If $j\equiv 0\pmod{n}$, then $i=2$. In this case, the only solution is $i=2$, $j=0$.

4. $3\cdot 2^i-5\cdot 2^j\equiv -1 \pmod{2^n-1}$. Similarly, we can obtain a congruence equation consisting entirely of positive terms: $2^i+2^{i+1}+1\equiv 2^j+2^{j+2}\pmod{2^n-1}$. If $i\equiv 0\pmod{n}$, then the left-hand side is $4$. Again, we can get a contradiction. If $i+1\equiv 0\pmod{n}$, then the left-hand side is $2^{n-1}+2$. Since $n\geq 6$, the only solution is $i=n-1$, $j=n-1$.

We sum the solutions of these four equations. For any even integer $n\geq 6$, $k=2$, the total number of pairs contributing to the $x^{2^n-2}$ is 
\[
1+1+2+1=5.
\]

Since the characteristic of the quotient ring is $2$, the coefficient of $x^{2^{n}-2}$ is $1$. Hence, in this case, $f(x)$ is not a permutation polynomial over $\mathbb{F}_{2^n}$.
\end{proof}

\section{Conclusions}
In this paper, we determined the permutation behavior of the polynomial function
$$
f(x)
=
\left(\frac{1}{x^2+x+\delta}\right)^{2^k}+x
$$
over $\mathbb{F}_{2^n}$, where
$\operatorname{Tr}_{\mathbb{F}_{2^n}/\mathbb{F}_2}(\delta)=1$. We proved that, for $0\leq k<n$, the function
$f(x)$ is a permutation of $\mathbb{F}_{2^n}$ if and only if
$k=0$ or $k=1$. This result confirms the conjecture suggested by a remark of Helleseth and Zinoviev \cite{HZ2003}, who observed from numerical evidence that the permutation property should occur only in the cases $k=0$ and $k=1$.




\bibliographystyle{elsarticle-num}\biboptions{sort&compress,longnamesfirst}
\bibliography{ffa-refs}
\end{document}